\documentclass[11pt, a4paper]{scrartcl}

\usepackage[cmtip,arrow]{xy}
\usepackage{amsmath,amssymb,enumerate}
\usepackage{amsthm}

\parindent 0pt
\parskip 7pt

\def \C{{\mathbb C}}
\def \F{{\mathbb F}}

\def \N{{\mathbb N}}

\def \Q{{\mathbb Q}}
\def \R{{\mathbb R}}
\def \Re{\operatorname{Re}}
\def \tr{{ tr}}

\def \Z{{\mathbb Z}}
\def \l{\left}
\def \r{\right}
\def \f{\frac}
     
\newtheorem{thm}{Theorem}

\newtheorem{cor}{Corollary}

\title{Chebyshev's Bias\\
for Ramanujan's $\tau$-function \\via the Deep Riemann Hypothesis}
\author{Shin-ya Koyama and Nobushige Kurokawa}
\date{}

\begin{document}
\maketitle
\begin{abstract} 
The authors assume the Deep Riemann Hypothesis
to prove that a weighted sum of Ramanujan's $\tau$-function has a bias to being positive.
This phenomenon is an analogue of Chebyshev's bias.
\end{abstract}

\section{Deep Riemann Hypothesis}
Let $M(p)$ be a unitary matrix of degree $r\in\N=\{1,2,3,...\}$ defined for each prime number $p$.
Consider an $L$-function expressed by the Euler product
\begin{equation}\label{EP}
L(s,M)=\prod_{p:\text{ prime}}\det(1-M(p)p^{-s})^{-1}.
\end{equation}
The above is absolutely convergent in $\Re(s)>1$.
It is assumed in this paper that $L(s,M)$ has an analytic continuation
as an entire function on $\C$
and that a functional equation holds with $s\leftrightarrow1-s$.
Moreover, $L(\f12,M)\ne0$ is assumed for simplicity.

The following conjecture is an essential part
of the Deep Riemann Hypothesis 
named and proposed by the second author in \cite{K}:

\textbf{Deep Riemann Hypothesis (DRH):}
The Euler product \eqref{EP} converges at $s=\f12$ with
$$
\lim_{x \to \infty}\prod_{p \le x} \det\l(1-M(p)p^{-\f12}\r)^{-1} 
=\sqrt{2}^{\delta(M)} L\l(\tfrac12, M\r),
$$
where $\delta(M)=\mathrm{ord}_{s=1}L(s,M^2)$ 
with ord$_{s=1}$ signifying the order of the pole at $s=1$.

Here the facter $\sqrt 2$ in the right hand side was discovered by Goldfeld \cite{G},
who proved the DRH implies the GRH for the $L$-functions attached to elliptic curves $E$
and that the DRH implies $m=\mathrm{rank}(E)$.

\textit {Remark.}
Note that since
\begin{align*}
L(s,M^2)
&=\prod_{p:\text{ prime}}\det(1-M(p)^2p^{-s})^{-1}\\
&=\f{L(s,\mathrm{Sym}^2M)}{L(s,\wedge^2M)},
\end{align*}
it holds that
$$
\delta(M)=\mathrm{ord}_{s=1}L(s,\mathrm{Sym}^2M)-\mathrm{ord}_{s=1}L(s,\wedge^2M).
$$
Here we do not suppose that $M$ is a representation. 
The square $M^2$ is interpreted as the Adams operation.

\textbf{Example 1.}
When $r=1$ and $M$ is the non-principal Dirichlet character modulo 4, namely,
$M(p)=(-1)^{\f{p-1}2}=\chi_{-4}(p)$ for odd primes $p$, the following holds.
$$
L(s,M)=L(s,\chi_{-4})\text{ and } \delta(M)=1.
$$
\textbf{Example 2.}
Put $r=2$ and let $\tau(p)\in\Z$ be Ramanujan's $\tau$-function defined
for $q=e^{2\pi iz}$ with $z\in\C$ and $\mathrm{Im}(z)>0$ by
$$
\Delta(z)=q\prod_{k=1}^\infty (1-q^k)^{24}
=\sum_{n=1}^\infty \tau(n)q^n.
$$
Denote $M(p)=\begin{pmatrix}e^{i\theta(p)}&0\\0&e^{-i\theta(p)}\end{pmatrix}$,
where
$\theta(p)\in[0,\pi]\cong\mathrm{Conj}(SU(2))$ is defined as 
$\tau(p)=2p^{\f{11}2}\cos(\theta(p))$.
It holds that
$$
L(s,M)=\prod_p(1-2\cos(\theta(p))p^{-s}+p^{-2s})^{-1}
$$
and that $\delta(M)=-1$.

\textit {Remark.}
By conventional notation Ramanujan's $L$-function is defined as
$$
L(s,\Delta)=\sum_{n=1}^\infty \f{\tau(n)}{n^s},
$$
which does not satisfy our assumption, since its functional equation holds with $s\longleftrightarrow 12-s$.
However, a normalization $\lambda(n)=n^{-\f{11}2}\tau(n)$ leads to the $L$-function
$$
L\l(s+\f{11}2,\Delta\r)=\sum_{n=1}^\infty \f{\lambda(n)}{n^s}
$$
which satisfies our assumption.
Putting $\lambda(p)=2\cos(\theta(p))$ for any prime $p$ gives 
$L(s,M)=L\l(s+\f{11}2,\Delta\r)$ in Example 2.

Numerical evidence of DRH for various Dirichlet characters is provided in \cite{KKK0}.
It is known by Conrad \cite{C} that the DRH implies the Riemann Hypothesis (RH).
The logical relation between RH and DRH is interpretable
in terms of the error term in the prime number theorem.
Here, illustrate the case that $M=\chi$ is a Dirichlet character.

Let $E(x,\chi)$ be the error term in the prime number theorem.
Namely,
$$E(x,\chi)=\psi(x,\chi)-\begin{cases}0&(\chi\ne1)\\x&(\chi=1)\end{cases},$$
where the following is put
$$
\psi(x,\chi)=\sum\limits_{n\le x}\chi(n)\Lambda(n)
$$
with 
$$
\Lambda(n)=\begin{cases}\log p&(n=p^k)\\0&\text{(otherwise)}\end{cases}.
$$
The following table shows the estimates of $E(x,\chi)$ which are implied by
the region of convergence of the Euler product (EP) of $L(s,\chi)$.
The estimate of $E(x,\chi)$ is improved by extending the region.

\begin{table}[h!]\centering
\begin{tabular}{|c|c|}
\hline
If EP converges in & Then $E(x,\chi)$ is\\
\hline
$\Re(s)\ge 1$ (classical)& $o(x)$\\
$\Re(s)> \alpha $ & $O( x^\alpha (\log x)^2)$\\
$\Re(s)> 1/2$ (RH)& $O(\sqrt x(\log x)^2)$\\
$\Re(s)\ge 1/2$ (DRH)& $o(\sqrt x\log x)$\\
\hline
\end{tabular}
\end{table}

The first row presents the classical prime number theorem proved by Hadamard and de la Vall\'ee Poussin.
The second and the third rows represent basic theorems found in textbooks in analytic number theory.
The last row reflects the fact that the DRH is equivalent to the bound $o(\sqrt x\log x)$,
which is proved by Conrad \cite{C}.

To end this section, append a note on the DRH for the Riemann zeta function $\zeta(s)$,
which is the case of $r=1$ and $M(p)=1$.
It satisfies neither the assumption of entireness nor the original form of the DRH,
since the pole at $s=1$ prevents the Euler product from converging at any point in $\Re(s)<1$.
However, Akatsuka discovered an asymptotic behavior of the Euler product on the critical line
which exactly corresponds to the above bound $o(\sqrt x\log x)$.
Indeed he proved the following theorem.

\textbf {Akatsuka's Theorem \cite{A}:}\
Let 
$$\zeta_x(s)=\prod_{p\le x}(1-p^{-s})^{-1}$$
be the finite Euler product of the Riemann zeta function over
primes $p\le x$.
Then the following conditions (i)--(iii) are equivalent.
\begin{enumerate}[(i)]
\item Let $\psi(x):=\psi(x,1)$ with $ 1$ being the trivial character.
Then
$$\psi(x)=x+o(\sqrt x\log x)\qquad(x\to\infty).$$
\item There exists $\tau_0\in\R$ such that
$$
\lim_{x\to\infty}\frac{(\log x)^m\zeta_x(s_0)}
{\exp\left[\lim_{\varepsilon\downarrow0}\left(\int_{1+\varepsilon}^x\frac{du}{u^{s_0}\log u}-\log\frac1\varepsilon\right)\right]}
$$
converges to a nonzero limit,
where $m$ is the order of the zero for $\zeta(s)$ at $s=s_0=\frac12+i\tau_0$. 
\item The above limit exists and is nonzero for any $\tau_0\in\R$.
\end{enumerate}

Here his condition (ii) is called the \textit {Deep Riemann Hypothesis} for $\zeta(s)$.
Note that Akatsuka's work\footnote{
It is regretful that the review article in MathSciNet for Akatsuka's paper \cite{A} is misleading.
It erroneously says Akatsuka's theorem gives an equivalent condition to the RH.
The point is that the DRH is stronger than the RH.
}
refines and reformulates the pioneering results of Ramanujan \cite{R}.

\section{Chebyshev's Bias}
The use of DRH enables us to unveil the mystery of Chebyshev's bias.
This section describes how it is achieved according to Aoki-Koyama \cite{AK}.
Recalling their discussion will clarify the reason why our main theorem 
on Ramanujan's $\tau$-function is regarded as an analogue of Chebyshev's bias.

Chebyshev's bias is the phenomenon that
there seem to be tending to be more primes of the form $4k+3$ than of the form $4k+1$ $(k\in\Z)$.
In fact denoting by $\pi(x;\,q,\,a)$ the number of primes $p\le x$ such that $p\equiv a\pmod q$,
the inequality 
\begin{equation}\label{1}
\pi(x;\, 4,\, 3) \ge \pi(x;\, 4,\, 1)
\end{equation}
holds for any $x$ less than $26861$, 
which is the first prime number violating the inequality \eqref{1}.
However, the both sides draw equal at the next prime 26863, and $\pi(x;\, 4,\, 3)$ gets ahead again until
$616841$.
It is computed that more than ``97\% of $x$'' satisfy the inequality \eqref{1}
in  $\{x\in\R\ |\ 3\le x<10^{11}\}$ .

Littlewood \cite{L}, however, proved that the difference
$\pi(x;\, 4,\, 3) - \pi(x;\, 4,\, 1)$ changes its sign infinitely many times.
In 1962 Knapowski and Turan conjectured that the limit of the percentage in all positive numbers of the set
$$
A_X=\{x<X\ |\ \pi(x;\, 4,\, 3) \ge \pi(x;\, 4,\, 1)\}
$$
as $X\to\infty$ would equal 100\%,
but now it is proved in \cite{Kac} under the Generalized Riemann Hypothesis
that the limit does not exist and that the conjecture is false.

In place of such a naive density, the logarithmic density takes its place.
Define the \textit {logarithmic density} of the set $A_X$ in $[2,X]$ by
$$
\delta(A_X)=\f1{\log X}\int_{t\in A_X}\f{dt}t.
$$
Rubinstein and Sarnak \cite{RS} proved that the limit $\lim_{X\to\infty}\delta(A_X)$
exists and equals $0.9959...$
under the assumption of the Generalized Riemann Hypothesis and
the Grand Simplicity Hypothesis for $L(s,\chi)$,
which asserts linear independence over $\Q$ of the imaginary parts of all nontrivial zeros  of $L(s,\chi)$
in the upper half plane.

It is known by Dirichlet's prime number theorem in arithmetic progressions that
the number of primes of the form $4k+3$ and $4k+1$ should equal.
Therefore, Chebyshev's bias means that the primes of the form $4k+3$ appears ``earlier'' than those of the form $4k+1$.
One of the reasons why the logarithmic density is effective may be that
it treats contribution of smaller numbers as greater ones
with help of the factor $1/t$ in the integral.

Instead of considering the logarithmic density,
Aoki and Koyama \cite{AK} adopted a weighted counting function
$$
\pi_s(x;\,q,\,a)=
\sum_{\genfrac{}{}{0pt}{1}{p<x:\,\text{prime}}{p\equiv a\pmod q}}\f{1}{p^s}\qquad(s\ge0)
$$
generalizing $\pi(x;\,q,\,a)=\pi_0(x;\,q,\,a)$.
Here the smaller prime $p$ allows higher contribution to $\pi_s(x;\,q,\,a)$,
as long as we fix $s>0$.
The function $\pi_s(x;\,q,\,a)$ $(s>0)$ should be more appropriate 
than $\pi(x;\,q,\,a)$ to represent the phenomenon,
because it reflects the size of primes which $\pi(x;\,q,\,a)$ ignores.
Although the natural density of the set
$$
A(s)=\{x>0\ |\ \pi_s(x;\,4,\,3)-\pi_s(x;\,4,\,1)>0\}
$$
does not exist when $s=0$, they showed under the assumption of the DRH that
it would exist and equal to 1 when $s=\f12$, that is,
$$
\lim_{X\to\infty}\f1X\int_{t\in A(\f12)\cap[2,X]}dt=1.
$$
Indeed, they reached a more precise form in \cite{AK}:
\begin{equation}\label{CB}
\pi_{\f12}(x;\,4,\,3)-\pi_{\f12}(x;\,4,\,1)
\sim\f12\log\log x\quad
(x\to\infty),
\end{equation}
where $f(x)\sim g(x)$ $(x\to\infty)$ means
$\lim\limits_{x\to\infty}\f{f(x)}{g(x)}=1$.

Once assuming DRH for $L(s,\chi_{-4})$, the proof of \eqref{CB} is 
simply illustrated as follows.
By DRH the limit
$$
\lim_{x \to \infty} \prod_{p \le x} \l(1-\chi(p)p^{-\f12}\r)^{-1}
$$
exists and is nonzero. Then it has a bounded logarithm:
$$
\sum_{p \le x} \log\l(1-\chi(p)p^{-\f12}\r)^{-1}=O(1)
\quad(x\to\infty).
$$
When we expand the left hand side as
$$
\sum_{k=1}^\infty\sum_{p \le x}\f{\chi(p)^k}{k p^{\frac k2}},
$$
the subseries over $k\ge3$ is absolutely convergent as $x\to\infty$, because we estimate that
\begin{align*}
\sum_{p\le x}\sum_{k=3}^\infty\l|\f{\chi(p)^k}{kp^{\frac k2}}\r|
&\le\f13\sum_{p\le x}\sum_{k=3}^\infty\f1{p^{\frac k2}}
\le\zeta\l(\f32\r)
\end{align*}
with the Riemann zeta function $\zeta(s)$ by an easy calculation
$$
\sum_{k=3}^\infty\f1{p^{\frac k2}}
=\f{\sqrt p}{\sqrt p-1}\f1{p^{\frac 32}}
\le\f3{p^{\frac 32}}
$$
for $p\ge3$.
On the other hand, the subseries over $k=2$ satisfies
by Euler's theorem that
$$
\sum_{p\le x}\f{\chi(p)^2}{2p}
=\sum_{p\le x}\f{1}{2p}
=\f12\log \log x +O(1)\quad (x\to\infty).
$$
Hence the behavior of the remaining part $k=1$ is obtained:
$$
\sum_{p\le x}\f{\chi(p)}{\sqrt p}=
-\f12\log \log x +O(1)\quad (x\to\infty).
$$
This completes the proof of \eqref{CB}.

In their original paper \cite{AK} Aoki and Koyama find that the bias described by \eqref{CB} is a special case of
the bias towards non-splitting primes in abelian extensions,
under the assumption of the DRH for $L$-functions attached to algebraic Hecke characters.

\section{Results}
The following theorem is a suitable generalization of \eqref{CB}
for the purpose of treating $\tau(p)$, which is the goal of this paper.

\begin{thm}\label{main}
Suppose DRH for $L(s,M)$, then
$$
\lim_{x\to\infty}\f{\displaystyle\sum\limits_{p\le x}\f{\tr(M(p))}{\sqrt p}}{\log\log x}
=-\f{\delta(M)}2.
$$
\end{thm}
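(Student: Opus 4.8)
The plan is to mimic the argument just given for \eqref{CB}, replacing the Dirichlet character $\chi_{-4}(p)$ by $\tr(M(p))$ and keeping careful track of the pole order $\delta(M)$. First I would apply DRH to $L(s,M)$: the finite Euler products $\prod_{p\le x}\det(1-M(p)p^{-1/2})^{-1}$ converge to $\sqrt2^{\,\delta(M)}L(\tfrac12,M)\ne0$, so taking logarithms gives
\begin{equation*}
\sum_{p\le x}\log\det\l(1-M(p)p^{-1/2}\r)^{-1}=O(1)\qquad(x\to\infty).
\end{equation*}
Using $\log\det(1-A)^{-1}=\sum_{k\ge1}\tfrac1k\tr(A^k)$ for a matrix $A$ with small entries, and that $M(p)$ is unitary (so $\|M(p)^k\|\le r$ and $|\tr(M(p)^k)|\le r$), the left-hand side expands as $\sum_{k\ge1}\sum_{p\le x}\tfrac{\tr(M(p)^k)}{k\,p^{k/2}}$.

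Next I would dispose of the tail $k\ge3$: since $|\tr(M(p)^k)|\le r$, the bound $\sum_{p\le x}\sum_{k\ge3}\bigl|\tfrac{\tr(M(p)^k)}{k p^{k/2}}\bigr|\le \tfrac r3\sum_p\sum_{k\ge3}p^{-k/2}\le r\,\zeta(\tfrac32)$ shows this part is $O(1)$, exactly as in the $\chi_{-4}$ case (with the extra factor $r$). It remains to understand the $k=2$ term $\sum_{p\le x}\tfrac{\tr(M(p)^2)}{2p}$. The key input is that $\tr(M(p)^2)$ is the $p$-th coefficient determining $L(s,M^2)$ near $s=1$; by the Remark in Section~1, $L(s,M^2)=L(s,\mathrm{Sym}^2M)/L(s,\wedge^2M)$, and $\delta(M)=\mathrm{ord}_{s=1}L(s,M^2)$. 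Invoking the prime number theorem for $L(s,M^2)$ (i.e.\ for $\mathrm{Sym}^2M$ and $\wedge^2M$, which I would assume have the standard analytic properties — nonvanishing on $\Re(s)=1$ except for the pole forced by $\delta$) yields
\begin{equation*}
\sum_{p\le x}\f{\tr(M(p)^2)}{p}=\delta(M)\log\log x+O(1)\qquad(x\to\infty),
\end{equation*}
the analogue of Mertens'/Euler's theorem, where the coefficient $\delta(M)$ is precisely the order of the pole of $L(s,M^2)$ at $s=1$. Hence the $k=2$ contribution is $\tfrac{\delta(M)}2\log\log x+O(1)$.

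Combining, the $k=1$ term must satisfy $\sum_{p\le x}\tfrac{\tr(M(p))}{\sqrt p}=-\tfrac{\delta(M)}2\log\log x+O(1)$, which gives the theorem after dividing by $\log\log x$. The main obstacle I anticipate is justifying the displayed asymptotic for $\sum_{p\le x}\tfrac{\tr(M(p)^2)}{p}$ rigorously: this is a partial-summation consequence of a prime number theorem for $L(s,M^2)$, and to run that argument one needs $L(s,\mathrm{Sym}^2M)$ and $L(s,\wedge^2M)$ to be well enough behaved (meromorphic continuation past $\Re(s)=1$, no zeros on the line, a pole only of the order recorded by $\delta$). In the two examples this is classical (Dirichlet $L$-functions, and the symmetric-square $L$-function of $\Delta$), so I would either invoke those known facts case by case, or state the needed analytic hypotheses on $M^2$ as part of the standing assumptions alongside DRH. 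A secondary technical point is ensuring the logarithm expansion $\log\det(1-M(p)p^{-1/2})^{-1}=\sum_k\tfrac1k\tr(M(p)^k)$ is valid for all $p$ (the eigenvalues have absolute value $p^{-1/2}<1$, so the series converges for every prime $p\ge2$) and that rearranging the double sum over $k$ and $p\le x$ is legitimate, which follows from the absolute convergence of the $k\ge3$ part together with the finiteness of the $p\le x$ range for fixed $k\in\{1,2\}$.
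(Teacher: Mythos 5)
Your proposal follows essentially the same route as the paper: take the logarithm of the partial Euler product at $s=\frac12$, split it into the $k=1$, $k=2$ and $k\ge3$ pieces, use DRH for the $O(1)$ bound, dispose of the $k\ge3$ tail trivially, and extract the bias from the $k=2$ term via the order $\delta(M)$ of $L(s,M^2)$ at $s=1$. The only difference is that the asymptotic $\sum_{p\le x}\tr(M(p)^2)/p=\delta(M)\log\log x+O(1)$, which you sketch from a prime number theorem for $L(s,M^2)$ under additional analytic hypotheses, is precisely what the paper invokes as the generalized Mertens theorem of \cite{KKK}.
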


\begin{proof}
\begin{align*}
\text{I}(x)   & = \sum_{p\le x} \frac{\tr(M(p))}{\sqrt p}, \\
\text{II}(x)  & = \f12 \sum_{p\le x} \f{\tr(M(p)^{2})}p, \\
\intertext{and}
\text{III}(x) & = \sum_{k \ge 3} \frac{1}{k} \sum_{p\le x}\f{\tr(M(p)^{k})}{p^{k/2}}. 
\end{align*}
Then,
$$
\text{I}(x)+\text{II}(x)+\text{III}(x)
=\log \prod_{p \le x} \det\l(1-M(p)p^{-\f12}\r)^{-1}.
$$
Hence DRH implies
\begin{equation}\label{K1}
\text{I}(x)+\text{II}(x)+\text{III}(x)
=O(1)\quad(x\to\infty).
\end{equation}
The generalized Mertens theorem (\cite{KKK}) says that
\begin{equation}\label{K2}
\lim_{x\to\infty}\f{\text{II}(x)}{\log\log x}
=\f{\delta(M)}2.
\end{equation}
Moreover, it is easy to see that
\begin{equation}\label{K3}
\text{III}(x)
=O(1)\quad(x\to\infty).
\end{equation}
Thus \eqref{K1} \eqref{K2} and \eqref{K3} give
$$
\lim_{x\to\infty}\f{\text{I}(x)}{\log\log x}
=-\f{\delta(M)}2.
$$
\end{proof}

Applying Theorem \ref{main} to Example 1 restores (3).
On the other hand, Example 2 together with Theorem 1 leads us to our main theorem:

\begin{thm}
Assume the DRH for $L(s+\f{11}2,\Delta)$. The following holds.
$$
\sum_{\genfrac{}{}{0pt}{1}{p\le x}{\text{prime}}}\f{\tau(p)}{p^6}\sim\f12\log\log x
\quad(x\to\infty).
$$
\end{thm}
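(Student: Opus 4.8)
The plan is to derive this statement as a direct specialization of Theorem \ref{main} to the data of Example 2, so the argument will be very short. First I would recall from Example 2 the choice $r=2$ and $M(p)=\mathrm{diag}(e^{i\theta(p)},e^{-i\theta(p)})$ with $\tau(p)=2p^{11/2}\cos\theta(p)$; here Deligne's bound $|\tau(p)|\le 2p^{11/2}$ is what guarantees $\theta(p)\in[0,\pi]$, i.e.\ that each $M(p)$ is genuinely unitary, so that the framework of Section 1 applies to this $M$. The elementary identity driving everything is then
$$
\tr(M(p))=2\cos\theta(p)=\f{\tau(p)}{p^{11/2}},
\qquad\text{hence}\qquad
\f{\tr(M(p))}{\sqrt p}=\f{\tau(p)}{p^{6}}.
$$

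Next I would use the identification $L(s,M)=L(s+\tfrac{11}2,\Delta)$ from the remark following Example 2. Shifting by $11/2$ sends the classical functional equation $s\leftrightarrow 12-s$ of $L(s,\Delta)$ to $s\leftrightarrow 1-s$, and sends $L(\tfrac12,M)$ to the central value $L(6,\Delta)$, which is nonzero; thus $L(s,M)$ meets the standing assumptions of Section 1, and ``DRH for $L(s+\tfrac{11}2,\Delta)$'' is exactly ``DRH for $L(s,M)$'' with this $M$. Applying Theorem \ref{main} and substituting the identity above gives
$$
\lim_{x\to\infty}\f{\displaystyle\sum_{p\le x}\f{\tau(p)}{p^{6}}}{\log\log x}
=-\f{\delta(M)}2.
$$

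It remains to insert the value $\delta(M)=-1$ recorded in Example 2. Concretely this is the assertion $\mathrm{ord}_{s=1}L(s,M^{2})=-1$, i.e.\ that $L(s,M^{2})=\prod_p(1-e^{2i\theta(p)}p^{-s})^{-1}(1-e^{-2i\theta(p)}p^{-s})^{-1}$ has a simple zero at $s=1$; equivalently, using $L(s,\mathrm{Sym}^2M)=\zeta(s)L(s,M^{2})$ and $L(s,\wedge^2M)=\zeta(s)$, it says that the symmetric-square $L$-function $L(s,\mathrm{Sym}^2\Delta)$ is holomorphic and nonvanishing at $s=1$, a classical fact from Rankin--Selberg theory (and Shimura's continuation of the symmetric square). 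With $\delta(M)=-1$ the right-hand side above equals $1/2$, which is precisely $\sum_{p\le x}\tau(p)/p^{6}\sim\tfrac12\log\log x$.

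As for where the difficulty lies: at this stage there is essentially none, since the deduction is a one-line unwinding of definitions on top of Theorem \ref{main}. All of the substance sits in ingredients already in place: Deligne's Ramanujan bound (so that $M(p)$ is unitary), Hecke's analytic continuation and functional equation for $L(s,\Delta)$, the non-vanishing $L(6,\Delta)\ne0$, and the evaluation $\delta(M)=-1$ coming from the behavior of $L(s,\mathrm{Sym}^2\Delta)$ at $s=1$ — together, of course, with the generalized Mertens theorem and the DRH that were used to prove Theorem \ref{main}. Once those are granted, only the arithmetic $\f{\tau(p)/p^{11/2}}{\sqrt p}=\f{\tau(p)}{p^{6}}$ and $-\tfrac12\cdot(-1)=\tfrac12$ remain.
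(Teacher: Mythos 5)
Your proposal is correct and follows essentially the same route as the paper: specialize Theorem \ref{main} to the data of Example 2 via $\tr(M(p))/\sqrt p=\tau(p)/p^{6}$ and $\delta(M)=-1$, giving the constant $\tfrac12$. The extra details you supply (Deligne's bound for unitarity, the shift of the functional equation, and the Rankin--Selberg/Shimura justification of $\delta(M)=-1$) are consistent with what the paper records in Example 2 and its remark, which the paper simply cites without re-proving.
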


\begin{proof}
By using the notation in Example 2, we deduce from Theorem 1 that
\begin{align*}
\sum_{\genfrac{}{}{0pt}{1}{p\le x}{\text{prime}}}\f{\tau(p)}{p^6}
&=\sum_{\genfrac{}{}{0pt}{1}{p\le x}{\text{prime}}}\f{2\cos(\theta(p))}{\sqrt p}\\
&=\sum_{\genfrac{}{}{0pt}{1}{p\le x}{\text{prime}}}\f{\tr(M(p))}{\sqrt p}\\
&\sim\f12\log\log x\quad(x\to\infty).
\end{align*}
\end{proof}

Theorem 2 asserts that the weighted sum of Ramanujan's $\tau$-function tends to be positive
under the DRH.
Sarnak \cite{S} also reached a similar prediction under the assumption of
the Generalized Riemann Hypothesis as well as the Grand Simplicity Hypothesis for $L(s,\Delta)$,
which asserts linear independence over $\Q$ of the imaginary parts of all nontrivial zeros of $L(s,\Delta)$
in the upper half plane.
He has pointed out that the sum
$$
S(x)=\sum_{\genfrac{}{}{0pt}{1}{p\le x}{\text{prime}}}\f{\tau(p)}{p^\f{11}2}
$$
has a bias to being positive, in the sense that the mean of the measure $\mu$ defined by
$$
\f1{\log X}\int_2^Xf\l(\f{\log x}{\sqrt x}S(x)\r)\f{dx}x\to\int_\R f(x)d_\mu(x)\quad
(x\to\infty)
$$
for $f\in C(\R)$ is equal to 1.
In the proof he closely examines the logarithmic derivative of $L(s,\Delta)$
to find that the second term in its expansion is the cause of the bias.
While our above discussion deals with the logarithm instead of its derivative,
we have also reached the point that the bias derives from the second term in the expansion.
Although our conclusion has a common cause of the bias with what is given in \cite{S},
our proof is straightforward enough to simplify the proofs.
Actually we have the following concise result.

\begin{cor}
Assume the DRH for $L(s+\f{11}2,\Delta)$.
The natural density of the set
$$
A=\l\{x>0\ \l|\ \sum_{p\le x}\f{\tau(p)}{p^6}>0\r\}\r.
$$
exists and is equal to 1. More precisely,
$$
\lim_{X\to\infty}\f1X\int_{t\in A\cap[2,X]}dt=1.
$$
\end{cor}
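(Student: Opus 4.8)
The plan is to deduce the Corollary directly from Theorem 2, which already supplies the asymptotic $\sum_{p\le x}\f{\tau(p)}{p^6}\sim\f12\log\log x$ $(x\to\infty)$; the only thing left is to convert this into the stated density assertion. Since $\f12\log\log x\to+\infty$, Theorem 2 forces the partial sum to be eventually positive: choosing $x_0>e$ so large that for every $x\ge x_0$ the ratio of $\sum_{p\le x}\tau(p)/p^6$ to $\f12\log\log x$ lies in $(\f12,\f32)$, and recalling that $\f12\log\log x>0$ for $x>e$, we obtain $\sum_{p\le x}\tau(p)/p^6>0$ for every $x\ge x_0$, i.e. $[x_0,\infty)\subseteq A$.

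Next I would note that $t\mapsto\sum_{p\le t}\tau(p)/p^6$ is a right-continuous step function whose jumps occur only at the primes, so $A$ is a countable union of intervals, hence Lebesgue measurable, and its complement satisfies $[2,X]\setminus A\subseteq[2,x_0]$, a set of measure at most $x_0-2$ independent of $X$. Therefore, for every $X>x_0$,
$$
X-x_0\le\int_{t\in A\cap[2,X]}dt\le X-2,
$$
and dividing by $X$ and letting $X\to\infty$ sends both bounds to $1$. This yields $\lim_{X\to\infty}\f1X\int_{t\in A\cap[2,X]}dt=1$, and in particular the natural density of $A$ exists and equals $1$.

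I do not expect a genuine obstacle here: all of the analytic content is packaged in Theorem 2, and what remains is the elementary fact that a quantity tending to $+\infty$ is eventually positive, together with the observation that the exceptional set carries measure bounded uniformly in $X$. The only points that merit an explicit sentence are the measurability of $A$ and the uniform bound $x_0-2$ on the exceptional measure, both of which are immediate from the step-function description of the partial sums.
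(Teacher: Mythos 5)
Your argument is correct and is exactly the intended deduction: the paper states the Corollary without a separate proof, treating it as an immediate consequence of Theorem 2, since the asymptotic $\sum_{p\le x}\tau(p)/p^6\sim\f12\log\log x$ forces eventual positivity and hence $A$ contains a half-line $[x_0,\infty)$. Your additional remarks on measurability and the uniform bound on the exceptional set are fine but not needed beyond what the paper implicitly assumes.
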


Sarnak \cite{S} also presented a prediction on the tendency of the
signature of $a(p)=p+1-\# E(\F_p)$ 
for an elliptic curve $E$ over $\Q$.
Under the Generalized Riemann Hypothesis and the Grand Simplicity Hypothesis,
he has observed that a weighted sum of $a(p)$ would have a bias to being
negative for $\mathrm{rank}(E)>0$ and positive for $\mathrm{rank}(E)=0$.

Our method introduced in this paper would provide evidence for this conjecture as well.
Our work on a bias for Satake parameters for $GL(n)$ in a more general framework
will include it in addition to a generalization of Theorem 2.
This will be done in a forthcoming paper \cite{KKK2}.

\end{document}